\documentclass[11pt]{amsart}
\usepackage{amsmath,cleveref,amssymb}

\title[Image of polynomials on graded matrix algebra]{A short note on the image of multilinear graded polynomials on matrix algebras}
\author[I.~da Silva]{Italo Bauer Rodrigues da Silva}
\address{Department of Mathematics, Instituto de Matem\'atica, Estat\'istica e Ci\^encia da Computa\c c\~ao, Universidade de S\~ao Paulo, SP, Brazil}
\email{italo.bauer@usp.br}
\thanks{I.~da Silva was financed in part by the Coordena\c c\~ao de Aperfei\c coamento de Pessoal de N\'ivel Superior - Brasil (CAPES) - Finance Code 001.}

\author[F.~Yasumura]{Felipe Yukihide Yasumura}
\address{Department of Mathematics, Instituto de Matem\'atica, Estat\'istica e Ci\^encia da Computa\c c\~ao, Universidade de S\~ao Paulo, SP, Brazil}
\email{fyyasumura@ime.usp.br}
\thanks{F.~Yasumura was supported by Fapesp grant no.~2024/14914-9 and 2025/16296-3.}

\newtheorem{Thm}{Theorem}
\newtheorem{lemma}[Thm]{Lemma}

\newtheorem{proposition}[Thm]{Proposition}

\theoremstyle{definition}

\theoremstyle{remark}

\begin{document}
\begin{abstract}
We investigate the subspaces obtained by images of multilinear graded polynomials evaluated on the matrix algebra endowed with the canonical $C_n$-grading, where $C_n$ denotes the cyclic group of order $n$. Moreover, this graded algebra admits a natural action of $C_n$ arising from a fine refinement of the grading. We prove that every $C_n$-submodule of the trivial component of the grading is the image of some multilinear polynomial in graded variables. In particular, we answer in the negative a recent conjecture posed by T.~de Castilho and L.~Centrone (2023).
\end{abstract}
\maketitle

\section{Introduction}
There are several recent papers devoted to the study of the images of (multilinear) polynomials evaluated on a given algebra of interest (see, for instance, \cite{AM1957,AEV2015,CF,FagMel2018,Fag2019,IvanMello,MaOl2016,Mello,Shoda,Spe2013}). This problem originates from a question posed by L'vov, asking whether the image of a multilinear polynomial evaluated on a matrix algebra is a vector subspace \cite[Problem 1.98]{Dn}. Positive results have been established for $2\times 2$ matrices \cite{KBSR2012}, while a partial answer is known for $3\times 3$ matrices \cite{KBSR2016}. This question is further motivated by an important counterpart in group theory (see \cite{Zel}).

In a similar direction, efforts have been made in the context of graded algebras, providing a new approach to the classical L'vov problem \cite{MG}. Also, the image of polynomials in multilinear graded variables evaluated on the matrix algebra endowed with the canonical grading (also known as the Vasilovsky grading) is studied in \cite{CM}. In that paper, the authors describe the linear span of the image of multilinear polynomials evaluated on $\mathrm{M}_n(\mathbb{Q})$, and conjecture that the same holds over any base field.

In the present paper, we investigate the same algebra. It is known that this algebra admits a natural structure of a $C_n$-module arising from a fine refinement of the grading. We prove that every $C_n$-submodule of the trivial component of the grading is the image of a multilinear polynomial. As a consequence, we answer the above conjecture in the negative.

\section{Notation and preliminaries}
Let $C_n = \langle \alpha \mid \alpha^n = 1 \rangle$, and let $\mathbb{F}$ be a field such that $\operatorname{char}\mathbb{F} = p \nmid n$. Initially, we assume that $\mathbb{F}$ contains a primitive $n$-th root of unity, but we will later drop this assumption.

Given an $\mathbb{F}$-algebra $\mathcal{A}$, a \emph{$C_n$-grading} on $\mathcal{A}$ is a vector space decomposition $\mathcal{A}=\bigoplus_{\tau\in C_n}\mathcal{A}_\tau$ such that $\mathcal{A}_\tau\mathcal{A}_{\tau'}\subseteq\mathcal{A}_{\tau\tau'}$, for all $\tau,\tau'\in C_n$. The subspaces $\mathcal{A}_\tau$ are called \emph{homogeneous components}, and their nonzero elements are called \emph{homogeneous elements}. Given $0\ne a\in\mathcal{A}_\tau$, its \emph{homogeneous degree} is $\deg_{C_n} a=\tau$.

We denote by $\mathrm{M}_n(\mathbb{F})$ the algebra of $n\times n$ matrices over $\mathbb{F}$, and by $e_{ij}$ the matrix unit, i.e., the matrix having a single nonzero entry equal to $1$ in position $(i,j)$. This algebra admits a natural $C_n$-grading given by
\begin{equation}\label{grading}
\left(\mathrm{M}_n(\mathbb{F})\right)_\tau:=\mathrm{Span}\{e_{ij}\mid\alpha^{i-j}=\tau\},\quad\tau\in C_n.
\end{equation}
This grading is called the \emph{canonical grading}, or the \emph{Vasilovsky grading}, on the matrix algebra (see \cite{V}).

We identify $\alpha$ with the cycle $(1\ 2\ \cdots\ n)\in\mathcal{S}_n$, where $\mathcal{S}_n$ denotes the symmetric group on $n$ symbols. Thus, $C_n\subseteq\mathcal{S}_n$ acts naturally on the set ${1,2,\ldots,n}$. The algebra $\mathrm{M}_n(\mathbb{F})$ then acquires a natural structure of a $C_n$-module: for $\tau\in C_n$, define
\begin{equation}\label{action}
\tau\cdot e_{ij}:=e_{\tau(i),\tau(j)}.
\end{equation}
Note that the grading is compatible with this action, in the sense that $\sigma\left(\mathrm{M}_n(\mathbb{F})\right)_\tau\subseteq\left(\mathrm{M}_n(\mathbb{F})\right)_\tau$, for each $\tau\in C_n$.

The free $C_n$-graded associative algebra is the free associative algebra $\mathbb{F}\langle X^{C_n}\rangle$, freely generated by $X^{C_n}=\{x_i^{(\tau)}\mid i\in\mathbb{N},\,\tau\in G\}$. The homogeneous degree of a monomial $x_{i_1}^{(\tau_1)}\cdots x_{i_m}^{(\tau_m)}$ is $\tau_1\cdots\tau_m$. This algebra satisfies the following universal property: any map $X^{C_n}\to\mathcal{A}$ that respects the $C_n$-grading, where $\mathcal{A}$ is a $C_n$-graded algebra, extends uniquely to a graded algebra homomorphism $\mathbb{F}\langle X^{C_n}\rangle\to\mathcal{A}$. In particular, given a polynomial $f(x_1^{(\tau_1)},\ldots,x_m^{(\tau_m)})$ and elements $a_1\in\left(\mathrm{M}_n(\mathbb{F})\right)_{\tau_1}, \ldots, a_m\in\left(\mathrm{M}_n(\mathbb{F})\right)_{\tau_m}$, we may evaluate $f(a_1,\ldots,a_m)$. We shall use the letter $z$ to denote variables of homogeneous degree $\alpha$, that is, $z_i := x_i^{(\alpha)}$ for each $i\in\mathbb{N}$.

\section{Main result}
Given $\chi\in\widehat{C_n}$, we let
$$
e_\chi:=\sum_{\tau\in C_n}\chi^{-1}(\tau)\tau(e_{ii}) =\sum_{i = 1}^n\chi^{-1}(\alpha^{i-1})e_{ii}\in\left(\mathrm{M}_n(\mathbb{F}\right))_1,
$$
and denote $\mathcal{V}_\chi:=\mathrm{Span}_\mathbb{F}\{e_\chi\}$. It is clear that $\mathcal{V}_\chi$ is an irreducible $C_n$-representation with character $\chi$. We set
\begin{equation*}
f_\chi(z_1,\ldots,z_n)=\frac{1}{n}\sum_{\tau\in C_n}\chi^{-1}(\tau)z_{\tau(1)}\cdots z_{\tau(n)},
\end{equation*}
where $deg(z_i) = \alpha$, for each $i$.

For each $i=1,\ldots,n-1$, we set $b_i:=e_{i,i+1}$, and $b_n:=e_{n1}$.

\begin{lemma}\label{lem1}
$f_\chi(b_{i_1},\ldots,b_{i_n})\ne0$ if and only if $(i_1,\ldots,i_n)$ is a cyclic permutation of $(1,\ldots,n)$. In addition, $f_\chi(b_{i_1},\ldots,b_{i_n})\in\mathcal{V}_\chi$.
\end{lemma}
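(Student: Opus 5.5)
We compute directly with matrix units, starting from the observation that $b_k=e_{k,\alpha(k)}$, where $\alpha(k)=k+1$ modulo $n$. A product $b_{j_1}\cdots b_{j_n}$ is nonzero exactly when $j_{t+1}=\alpha(j_t)$ for every $t$. In that case it equals $e_{j_1,\alpha^n(j_1)}=e_{j_1j_1}$. In other words, a product of $n$ of the $b$'s is nonzero if and only if the sequence $(j_1,\ldots,j_n)=(k,k+1,\ldots,k+n-1)$ (indices mod $n$) is a cyclic shift of $(1,\ldots,n)$, and then the product is $e_{kk}$. Both facts follow by multiplying the matrix units one at a time.

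Next we expand $f_\chi(b_{i_1},\ldots,b_{i_n})=\frac1n\sum_{\tau}\chi^{-1}(\tau)\,b_{i_{\tau(1)}}\cdots b_{i_{\tau(n)}}$. Since $\tau$ ranges over powers of $\alpha$, the sequence $(i_{\tau(1)},\ldots,i_{\tau(n)})$ is a cyclic rotation of $(i_1,\ldots,i_n)$.

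\emph{If direction.} Suppose $(i_1,\ldots,i_n)$ is not a cyclic permutation of $(1,\ldots,n)$. Then no rotation of it is one either, so every summand vanishes and $f_\chi=0$.

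\emph{Only if direction.} Suppose $(i_1,\ldots,i_n)=(k,k+1,\ldots)$. Then every rotation $\tau=\alpha^{s}$ gives a nonzero product $e_{\tau'(k)\tau'(k)}$, where the starting index $i_{\tau(1)}=i_{1+s}=k+s$. Summing over $\tau$, we get
\[
f_\chi(b_{i_1},\ldots,b_{i_n})=\frac1n\sum_{s=0}^{n-1}\chi^{-1}(\alpha^{s})\,e_{k+s,k+s}.
\]
The $n$ diagonal positions $k+s$ are all distinct, so this matrix has nonzero entries: the values $\chi^{-1}(\alpha^s)$ are roots of unity, hence nonzero. This gives nonvanishing. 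Note that $1/n$ makes sense because $p\nmid n$.

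For membership in $\mathcal V_\chi$, we reindex by $j=k+s$, so that $\alpha^s=\alpha^{j-k}=\alpha^{j-1}\alpha^{1-k}$. Since $\chi$ is a homomorphism, the sum becomes
\[
\frac1n\,\chi^{-1}(\alpha^{1-k})\sum_{j}\chi^{-1}(\alpha^{j-1})e_{jj}=\frac1n\,\chi(\alpha^{k-1})\,e_\chi\in\mathcal V_\chi.
\]

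The main point needing care is bookkeeping. We must check that $\tau(t)$ for $\tau=\alpha^s$ really equals $t+s$, so that the rotation of the index sequence and the character value match consistently. This matters because the sign of the exponent in $\chi^{-1}$ depends on it. If the convention turns out reversed, we still get a scalar multiple of $e_\chi$, just with a different scalar, so membership in $\mathcal V_\chi$ is robust. The nonvanishing argument does not depend on the convention at all.
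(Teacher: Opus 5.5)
Your proof is correct and is essentially the paper's argument. The same multiplication rule for the $b_i$ gives the direction ``nonzero $\Rightarrow$ cyclic''. For a cyclic sequence you reindex the sum directly, whereas the paper uses $f_\chi(\tau(b_1),\ldots,\tau(b_n))=\tau(f_\chi(b_1,\ldots,b_n))$ together with $\tau(e_\chi)=\chi(\tau)e_\chi$. Your value $\frac1n\chi(\alpha^{k-1})e_\chi$ also keeps the factor $\frac1n$ that the paper's text drops when it writes $f(b_1,\ldots,b_n)=e_\chi$.

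Two small fixes. First, your ``if'' and ``only if'' labels are swapped. Second, your closing claim that membership in $\mathcal V_\chi$ survives a reversed convention is false. With $\alpha^s(t)=t-s$, the same reindexing produces a multiple of $e_{\chi^{-1}}=\sum_j\chi(\alpha^{j-1})e_{jj}$, not of $e_\chi$. You do not need the hedge, because $\alpha=(1\ 2\ \cdots\ n)$ gives $\alpha^s(t)=t+s$, which is exactly what you already used in writing $b_k=e_{k,\alpha(k)}$.
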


\begin{proof}
From the definition of $b_i$, it follows that: 
\[
b_1b_2\dots b_n = e_{12}e_{23}\dots e_{n-1, n}e_{n, 1} = e_{11}
\]
Then $\alpha^k(b_1b_2\dots b_n) = \alpha^k(e_{11}) = e_{k+1, k+1}$. Then, $f(b_1,\ldots,b_n)=e_\chi\ne0$. Now, assume that $(i_1,\ldots,i_n)$ is a cyclic permutation of $(1,2,\ldots,n)$, say $(\tau(1),\tau(2),\ldots,\tau(n))$, for some $\tau\in C_n$. Then,
\[
f_\chi(b_{i_1},\ldots,b_{i_n})=f_\chi(\tau(b_1),\ldots,\tau(b_n))=\tau(f_\chi(b_1,\ldots,b_n))=\tau(e_\chi)=\chi(\tau)e_\chi.
\]

Conversely, suppose that $f_\chi(b_{i_1}, \dots , b_{i_n}) \not = 0$. Then there exists some $\tau \in C_n$ such that $b_{\tau(i_1)}b_{\tau(i_2)}\dots b_{\tau(i_n)} \not = 0$.

The multiplication rule of the $b_i$ is
\[
b_{i_l}b_{i_k} = \begin{cases} 
e_{{i_l},i_{l+2}}, & \text{if} \  i_l \equiv i_k - 1 \pmod{n} \\ 
0, & \text{otherwise} 
\end{cases}
\]
Thus, we conclude that the sequence $(i_1,i_2, \dots, i_n)$ is a cyclic permutation of $(1,2,\ldots,n)$.
\end{proof}

\begin{lemma}\label{im=v}
$\operatorname{Im} f_\chi = \mathcal{V}_\chi$.
\end{lemma}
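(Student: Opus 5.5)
We show the two inclusions $\operatorname{Im} f_\chi\subseteq\mathcal{V}_\chi$ and $\mathcal{V}_\chi\subseteq\operatorname{Im} f_\chi$. For the first, use multilinearity. Each variable $z_i$ has degree $\alpha$, so it is evaluated on $\left(\mathrm{M}_n(\mathbb{F})\right)_\alpha$. By \eqref{grading}, this component is spanned by the matrix units $e_{ij}$ with $\alpha^{i-j}=\alpha$, that is, $j\equiv i-1 \pmod n$. One should double-check the orientation convention, since one of the two conventions makes $b_1,\ldots,b_n$ a basis of this component and the other makes it $b_1^t,\ldots,b_n^t$. In either case the component is spanned by $n$ matrix units forming a single $n$-cycle. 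Below I take the basis to be $\{b_1,\ldots,b_n\}$, the convention implicit in Lemma~\ref{lem1}.

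Write arbitrary homogeneous elements as $a_k=\sum_{i}\lambda_{k,i}b_i$ for $k=1,\ldots,n$. Multilinearity gives
\[
f_\chi(a_1,\ldots,a_n)=\sum_{i_1,\ldots,i_n}\lambda_{1,i_1}\cdots\lambda_{n,i_n}\,f_\chi(b_{i_1},\ldots,b_{i_n}).
\]
By Lemma~\ref{lem1}, every summand lies in $\mathcal{V}_\chi$, and in fact only the summands indexed by cyclic permutations of $(1,\ldots,n)$ survive. Since $\mathcal{V}_\chi$ is a subspace, $\operatorname{Im} f_\chi\subseteq\mathcal{V}_\chi$.

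For the reverse inclusion, Lemma~\ref{lem1} gives $f_\chi(b_1,\ldots,b_n)=e_\chi$, up to the nonzero scalar normalisation coming from the factor $\tfrac1n$, which is harmless because $p\nmid n$. For any $\lambda\in\mathbb{F}$, multilinearity then gives
\[
f_\chi(\lambda b_1,b_2,\ldots,b_n)=\lambda f_\chi(b_1,\ldots,b_n),
\]
which is an arbitrary scalar multiple of $e_\chi$. This includes $0$ at $\lambda=0$. Hence every element of the one-dimensional space $\mathcal{V}_\chi$ is attained, and $\operatorname{Im} f_\chi=\mathcal{V}_\chi$.

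The main obstacle is bookkeeping. One must confirm that $\left(\mathrm{M}_n(\mathbb{F})\right)_\alpha$ is exactly the span of the $b_i$, or of their transposes, so that Lemma~\ref{lem1} or its transposed analogue applies. If the convention in \eqref{grading} gives the transposes, I would rerun the same argument with $b_i^t$. The product $b_1^t\cdots b_n^t$ is again a diagonal matrix unit, and the $C_n$-equivariance computation in Lemma~\ref{lem1} goes through verbatim. Everything else is immediate from multilinearity and the fact that $\mathcal{V}_\chi$ is one-dimensional.
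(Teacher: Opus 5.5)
Under the convention you adopt, where $b_1,\ldots,b_n$ span the degree-$\alpha$ component (the convention Lemma~\ref{lem1} tacitly uses), your argument is correct and is essentially the paper's. Lemma~\ref{lem1} gives $0\ne\operatorname{Im} f_\chi\subseteq\mathcal{V}_\chi$, and one-dimensionality finishes it. You only make explicit the multilinear expansion and the scaling step (the image of a multilinear polynomial is closed under scalars) that the paper leaves implicit. You are also right that $f_\chi(b_1,\ldots,b_n)$ is really $\tfrac1n e_\chi$.

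Your fallback paragraph, however, is wrong and should be dropped. Your own computation $j\equiv i-1$ shows that \eqref{grading}, read literally, puts $e_{i+1,i}=b_i^t$ in degree $\alpha$. So the transposed case is exactly the literal one, and there the claims fail.
\begin{itemize}
\item For $n\ge 3$ we have $b_1^tb_2^t=e_{21}e_{32}=0$. The nonzero ordering is the reversed one, $b_n^t\cdots b_1^t=e_{11}$.
\item Since $\alpha\cdot b_i^t=b_{i+1}^t$, the action sends $(b_n^t,\ldots,b_1^t)$ to a cyclic shift in the opposite direction from the $b_i$ case. The equivariance computation therefore gives $\alpha\cdot f_\chi(b_n^t,\ldots,b_1^t)=\chi(\alpha)^{-1}f_\chi(b_n^t,\ldots,b_1^t)$.
\item Hence the image is $\mathcal{V}_{\chi^{-1}}$, not $\mathcal{V}_\chi$. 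For instance, take $n=3$ and $\chi(\alpha)=\omega$ a primitive cube root of unity. Then $f_\chi(e_{13},e_{32},e_{21})=\tfrac13(e_{11}+\omega e_{22}+\omega^2e_{33})=\tfrac13 e_{\chi^{-1}}\notin\mathcal{V}_\chi$.
\end{itemize}
So the lemma as stated holds only with the convention $\deg e_{ij}=\alpha^{j-i}$. This sign slip is already present in the paper. You should state explicitly that you fix this convention, rather than assert that the other one goes through verbatim.
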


\begin{proof}
From \Cref{lem1}, we have $0\ne\operatorname{Im} f_\chi \subseteq \mathcal{V}_\chi$. Since $\mathcal{V}_\chi$ is $1$-dimensional, we get $\operatorname{Im} f_\chi=\mathcal{V}_\chi$.
\end{proof}

\begin{lemma}\label{1}
Define
\[
r_\chi = \sum_{i=1}^n \chi^{-1}(\alpha^{i-1})\, b_i,\quad\,r_1 = \sum_{t=1}^n b_t,\quad \chi\in\widehat{C_n}.
\]
Then, $f_\chi(r_{\chi'}, r_1, \dots, r_1) = \delta_{\chi,\chi'}e_\chi$, where $\delta_{\chi,\chi'}=0$, if $\chi\ne\chi'$, and $\delta_{\chi,\chi}=1$.
\end{lemma}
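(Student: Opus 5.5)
The plan is to expand both arguments by multilinearity and let \Cref{lem1} remove almost all of the resulting terms. Since $f_\chi$ is multilinear and $r_{\chi'}=\sum_{i}\chi'^{-1}(\alpha^{i-1})b_i$, $r_1=\sum_t b_t$, we get
\[
f_\chi(r_{\chi'},r_1,\ldots,r_1)=\sum_{(i_1,\ldots,i_n)}\chi'^{-1}(\alpha^{i_1-1})\,f_\chi(b_{i_1},\ldots,b_{i_n}),
\]
where $(i_1,\ldots,i_n)$ runs over all of $\{1,\ldots,n\}^n$. By \Cref{lem1}, only the cyclic permutations of $(1,\ldots,n)$ give nonzero terms. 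Such a tuple is determined by its first entry: $(i_1,\ldots,i_n)=(\sigma(1),\ldots,\sigma(n))$ with $\sigma=\alpha^{i_1-1}$. So the sum collapses to
\[
\sum_{k=0}^{n-1}\chi'^{-1}(\alpha^k)\,f_\chi\bigl(b_{\alpha^k(1)},\ldots,b_{\alpha^k(n)}\bigr).
\]

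Next I evaluate each surviving term, recomputing it directly to keep the normalization straight. First, $\sigma(b_j)=b_{\sigma(j)}$ under the action \eqref{action}, since $\alpha\cdot e_{j,j+1}=e_{j+1,j+2}$, indices taken mod $n$. Second, the action is by algebra automorphisms, and it permutes the monomials $z_{\tau(1)}\cdots z_{\tau(n)}$ in the variables. Hence $f_\chi(\sigma(b_1),\ldots,\sigma(b_n))=\sigma\bigl(f_\chi(b_1,\ldots,b_n)\bigr)$.

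Now compute the base value. We have $b_{\tau(1)}\cdots b_{\tau(n)}=\tau(b_1\cdots b_n)=\tau(e_{11})$, so
\[
f_\chi(b_1,\ldots,b_n)=\tfrac1n\sum_\tau\chi^{-1}(\tau)\tau(e_{11})=\tfrac1n e_\chi.
\]
This differs from the value $e_\chi$ stated in the proof of \Cref{lem1}, so I will use this directly computed value, which carries the factor $\tfrac1n$. Then $\sigma(e_\chi)=\sum_\tau\chi^{-1}(\tau)\sigma\tau(e_{11})=\chi(\sigma)e_\chi$, after reindexing $\tau\mapsto\sigma^{-1}\tau$. Hence the $k$-th term equals $\tfrac1n\chi'^{-1}(\alpha^k)\chi(\alpha^k)e_\chi$.

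Summing over $k$ and using orthogonality of characters of $C_n$, we get
\[
\sum_{k=0}^{n-1}(\chi\chi'^{-1})(\alpha^k)=n\,\delta_{\chi,\chi'}.
\]
This sum is $n$ when $\chi=\chi'$. Otherwise $\chi\chi'^{-1}$ is a nontrivial character, so it sends $\alpha$ to an $n$-th root of unity $\omega\ne1$, and the sum is a geometric series $\sum_k\omega^k=0$; here $\chi,\chi'$ take values in the $n$-th roots of unity in $\mathbb{F}$, and $p\nmid n$. The factor $n$ cancels the $\tfrac1n$ and gives $\delta_{\chi,\chi'}e_\chi$.

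The main obstacle is bookkeeping: checking that the cyclic tuples are parametrized exactly once by $k$, that the coefficient attached to each is $\chi'^{-1}(\alpha^{i_1-1})$, and that the $\tfrac1n$ normalization and the equivariance $\sigma(e_\chi)=\chi(\sigma)e_\chi$ combine correctly. Everything else follows from multilinearity and \Cref{lem1}.
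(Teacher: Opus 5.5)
Your proof is correct, but it follows a different route from the paper's. The paper does not expand multilinearly or invoke \Cref{lem1}. It uses $r_1^k=\sum_i e_{i,i+k}$ to compute each product $r_1^k r_\chi r_1^{n-1-k}$ directly as a scalar multiple of $e_\chi$, sums these against the coefficients of $f_\chi$, and simply asserts the case $\chi'\neq\chi$. You instead expand both arguments into the $b_i$ and let the vanishing half of \Cref{lem1} discard every non-cyclic tuple. You then evaluate the $n$ surviving terms through equivariance and $\sigma(e_\chi)=\chi(\sigma)e_\chi$, so both cases come out of the single orthogonality relation $\sum_k(\chi\chi'^{-1})(\alpha^k)=n\,\delta_{\chi,\chi'}$.

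The paper's computation is self-contained and needs neither \Cref{lem1} nor the module structure. Yours relies on earlier facts instead, and in exchange confines all the index bookkeeping to one equivariance step. That is a real advantage here. In the paper's display, the monomial $r_1^k r_\chi r_1^{n-1-k}$ actually arises from $\tau=\alpha^{-k}$ and so carries the coefficient $\tfrac1n\chi(\alpha^k)$. Also, $r_1^k r_\chi r_1^{n-1-k}$ actually equals $\chi^{-1}(\alpha^k)e_\chi$. The two inversions cancel, so the lemma still holds, but your argument never runs into them. You were also right that $f_\chi(b_1,\ldots,b_n)=\tfrac1n e_\chi$, not $e_\chi$.

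A minor remark: the identity $f_\chi(\sigma(b_1),\ldots,\sigma(b_n))=\sigma\bigl(f_\chi(b_1,\ldots,b_n)\bigr)$ needs only that $\sigma$ is an $\mathbb{F}$-linear algebra automorphism (conjugation by a permutation matrix). Your comment that the action permutes the monomials plays no role.
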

\begin{proof}

First, note that $r_1^k=\sum_{i=1}^n e_{i,i+k}$, where the indexes should be understood as equality modulo $n$. Then,
\[
r_1^k r_\chi r_1^{n-1-k}
= \sum_{i=1}^n \chi^{-1}(\alpha^{i-1}) e_{i-k,i+1+n-1-k}
= \chi(\alpha^{k})e_\chi.
\]
Hence,
\[
f_\chi(r_\chi,r_1,\ldots,r_1)=\frac1n\sum_{k=0}^{n-1}\chi^{-1}(\alpha^{k})r_1^kr_\chi r_1^{n-1-k}=e_\chi.
\]
It is clear that, if $\chi'\ne\chi$, then $f_\chi(r_{\chi'},r_1,\ldots,r_1)=0$.
\end{proof}

As a consequence, we obtain that each $C_n$-submodule of $\left(\mathrm{M}_n(\mathbb{F})\right)_1$ is the image of some multilinear polynomial:
\begin{proposition}\label{prop}
Assume that $\mathbb{F}$ contains a primitive $n$-th rooty of the unity, let $\{\chi_1,\ldots,\chi_t\}\subseteq\widehat{C_n}$ and set
$$
f=\sum_{\ell=1}^tf_{\chi_\ell}(z_1,\ldots,z_n).
$$
Then, $\mathrm{Im}\,f=\mathcal{V}_{\chi_1}+\cdots+\mathcal{V}_{\chi_t}$.
\end{proposition}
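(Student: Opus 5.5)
The proof splits into two inclusions. We may assume the characters $\chi_1,\ldots,\chi_t$ are pairwise distinct; otherwise a repeated summand $f_\chi$ would appear with a coefficient that could vanish in characteristic $p$.

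\emph{Inclusion $\mathrm{Im}\,f\subseteq\mathcal{V}_{\chi_1}+\cdots+\mathcal{V}_{\chi_t}$.} First we show that every evaluation of $f_\chi$ on arbitrary elements of degree $\alpha$ lies in $\mathcal{V}_\chi$. This is stronger than \Cref{lem1}, which only treats the basis elements $b_i$. The component $\left(\mathrm{M}_n(\mathbb{F})\right)_\alpha$ is spanned by $b_1,\ldots,b_n$, because $\alpha^{i-j}=\alpha$ forces $j\equiv i-1$. Actually it is cleaner to order the $b_i$ as the paper does, with $b_i=e_{i,i+1}$ and $b_n=e_{n1}$, and use only linearity. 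Since $f_\chi$ is multilinear, each evaluation $f_\chi(a_1,\ldots,a_n)$ with $a_k=\sum_i\lambda_{k,i}b_i$ expands as a linear combination of terms $f_\chi(b_{i_1},\ldots,b_{i_n})$. By \Cref{lem1}, each such term lies in $\mathcal{V}_\chi$. Hence $\mathrm{Im}\,f_\chi\subseteq\mathcal{V}_\chi$, and $\mathrm{Im}\,f\subseteq\sum_\ell\mathcal{V}_{\chi_\ell}$ follows.

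\emph{Inclusion $\sum_\ell\mathcal{V}_{\chi_\ell}\subseteq\mathrm{Im}\,f$.} Fix a target element $\sum_{\ell}c_\ell e_{\chi_\ell}$. Set $r:=\sum_{\ell=1}^t c_\ell r_{\chi_\ell}$, which is homogeneous of degree $\alpha$. Evaluate $f$ at $(r,r_1,\ldots,r_1)$. Linearity in the first variable and \Cref{1} give
\[
f(r,r_1,\ldots,r_1)=\sum_{\ell=1}^t\sum_{m=1}^t c_m f_{\chi_\ell}(r_{\chi_m},r_1,\ldots,r_1)=\sum_{\ell,m}c_m\delta_{\chi_\ell,\chi_m}e_{\chi_\ell}=\sum_{\ell}c_\ell e_{\chi_\ell}.
\]
The last equality uses that the $\chi_\ell$ are distinct. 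Therefore every element of $\sum_\ell\mathcal{V}_{\chi_\ell}$ is attained, which proves the reverse inclusion.

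The only delicate point is the first inclusion. There we must check that \Cref{lem1} applies to arbitrary index tuples $(i_1,\ldots,i_n)$, including tuples with repetitions. The lemma covers this: any tuple that is not a cyclic permutation of $(1,\ldots,n)$ gives $0\in\mathcal{V}_\chi$. With that, multilinear expansion reduces everything to basis elements. The second inclusion is a direct computation from \Cref{1}. Note also that the sum $\sum_\ell\mathcal{V}_{\chi_\ell}$ is direct, since the $e_\chi$ are linearly independent (this is a Vandermonde argument). This is not needed for the equality itself, but it identifies the image as the $C_n$-submodule with those isotypic components.
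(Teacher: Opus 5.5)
Your proof is correct and follows the paper's route: the paper simply cites \Cref{im=v}, whose inclusion $\mathrm{Im}\,f_\chi\subseteq\mathcal{V}_\chi$ is exactly your multilinear expansion plus \Cref{lem1}, together with \Cref{1} evaluated, as you do, at $(\sum_\ell c_\ell r_{\chi_\ell},r_1,\ldots,r_1)$; you just make these steps explicit. One caveat on your spanning sentence: under the grading as literally defined, $\alpha^{i-j}=\alpha$ gives the matrix units $e_{i,i-1}$, while for $n\ge 3$ the matrix $b_i=e_{i,i+1}$ has degree $\alpha^{-1}$, so you (like the paper) are tacitly using the convention $\deg e_{ij}=\alpha^{j-i}$, and you should state it explicitly, since under the literal convention the same computation gives $\mathrm{Im}\,f_\chi=\mathcal{V}_{\chi^{-1}}$ instead of $\mathcal{V}_\chi$.
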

\begin{proof}
It follows from \Cref{im=v,1}.
\end{proof}

From now on, we assume that $\mathbb{F}$ is an arbitrary field with either zero characteristic or 
$\operatorname{char}(\mathbb{F})$
does not divide $n$. Let $\mathbb{E}$ be a splitting field of $C_n$ over $\mathbb{F}$. Then, each element of the character group $\widehat{C_n}$ has values in $\mathbb{E}^\times$, where $\mathbb{E}^\times$ denotes the set of nonzero elements of $\mathbb{E}$. Moreover, the Galois group $\mathrm{Gal}(\mathbb{E}/\mathbb{F})$ acts on each character via
$$
g\cdot\chi:=g\circ\chi,\quad\chi\in\widehat{C_n},\,g\in\mathrm{Gal}(\mathbb{E}/\mathbb{F}).
$$

Now, write
\begin{equation}\label{eq}
\mathbb{F}C_n=\mathbb{F}C_ne_1\oplus\cdots\oplus\mathbb{F}C_ne_m,
\end{equation}
where each $e_i$ is a primitive central idempotent. Thus, $\mathbb{F}C_ne_i$ is a simple algebra, i.e., $\mathbb{F}C_ne_i$ is a field extension of $\mathbb{F}$ and a subfield of $\mathbb{E}$. For each $i$, after extending scalars, we have
\[
\mathbb{E}C_ne_i=\mathbb{E}C_ne_{\chi_{j_{1,i}}}\oplus\cdots\oplus\mathbb{E}C_ne_{\chi_{j_{m_i,i}}},
\]
for some characters $\chi_{j_{k,i}}$. The set of characters $\mathcal{O}_i:=\{\chi_{j_{1,i}},\ldots,\chi_{j_{m_i,i}}\}$ coincides with a $\mathrm{Gal}(\mathbb{E}/\mathbb{F})$-orbit inside $\widehat{C_n}$. In addition, the primitive central idempotent is precisely the sum
\[
e_{\mathcal{O}_i}=\sum_{\chi \in \mathcal{O}_i} e_\chi.
\]

We shall identify $\left(\mathrm{M}_n(\mathbb{F})\right)_1$ with the regular $C_n$-representation $\mathbb{F}C_n$. Using all the above notation, we have:

\begin{lemma}\label{coef}
Let $\mathcal{O}$ be a $\mathrm{Gal}(\mathbb{E}/\mathbb{F})$-orbit in $\widehat{C_n}$ and let
\[
f_{\mathcal{O}}(z_1,\ldots,z_n):=\sum_{\chi\in\mathcal{O}}f_\chi(z_1,\ldots,z_n).
\]
Then $f_{\mathcal{O}}\in\mathbb{F}\langle X^{C_n}\rangle$. In addition, $\mathrm{Im}\,f_{\mathcal{O}}=\mathbb{F}C_ne_\mathcal{O}$, where $e_\mathcal{O}=\sum_{\chi\in\mathcal{O}}e_\chi$.
\end{lemma}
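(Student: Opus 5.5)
The plan has two parts: first show that $f_{\mathcal{O}}$ has coefficients in $\mathbb{F}$, then compute its image by extending scalars to $\mathbb{E}$ and descending back to $\mathbb{F}$.

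\emph{Rationality of the coefficients.} Expanding the definition,
\[
f_{\mathcal{O}}=\frac1n\sum_{\tau\in C_n}\Big(\sum_{\chi\in\mathcal{O}}\chi^{-1}(\tau)\Big)z_{\tau(1)}\cdots z_{\tau(n)}.
\]
So it suffices that each coefficient $c_\tau:=\sum_{\chi\in\mathcal{O}}\chi^{-1}(\tau)\in\mathbb{E}$ is fixed by $\mathrm{Gal}(\mathbb{E}/\mathbb{F})$. For $g\in\mathrm{Gal}(\mathbb{E}/\mathbb{F})$ we have $g(c_\tau)=\sum_{\chi\in\mathcal{O}}(g\cdot\chi)^{-1}(\tau)$. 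Since $\chi\mapsto g\cdot\chi$ permutes the orbit $\mathcal{O}$, this gives $g(c_\tau)=c_\tau$. As $\mathbb{E}/\mathbb{F}$ is Galois (a cyclotomic extension, separable because $p\nmid n$), it follows that $c_\tau\in\mathbb{F}$. Also $1/n\in\mathbb{F}$. Hence $f_{\mathcal{O}}\in\mathbb{F}\langle X^{C_n}\rangle$. The same argument shows $e_{\mathcal{O}}\in(\mathrm{M}_n(\mathbb{F}))_1$, consistent with $e_{\mathcal{O}}$ being the primitive central idempotent $e_i$ of \eqref{eq}.

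\emph{Image over $\mathbb{E}$.} Work inside $\mathrm{M}_n(\mathbb{E})$. Every evaluation of $f_{\mathcal{O}}$ on degree-$\alpha$ matrices is diagonal. On the $e_{ii}$-coordinate it is a sum of $\chi$-twisted terms, and by \Cref{lem1} and multilinearity each $f_\chi$ lands in $\mathcal{V}_\chi$ (over $\mathbb{E}$). Thus
\[
\mathrm{Im}\,f_{\mathcal{O}}\subseteq\bigoplus_{\chi\in\mathcal{O}}\mathcal{V}_\chi=\mathbb{E}C_ne_{\mathcal{O}}.
\]
The reverse inclusion over $\mathbb{E}$ follows from \Cref{1}, exactly as in \Cref{prop}: $f_{\mathcal{O}}(\sum_\chi\lambda_\chi r_\chi,r_1,\ldots,r_1)=\sum_\chi\lambda_\chi e_\chi$.

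\emph{Descent to $\mathbb{F}$ (the main obstacle).} I would argue the two inclusions separately.

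For $\mathrm{Im}\,f_{\mathcal{O}}\subseteq\mathbb{F}C_ne_{\mathcal{O}}$: an evaluation on $\mathbb{F}$-matrices is an $\mathbb{F}$-matrix lying in $\mathbb{E}C_ne_{\mathcal{O}}$. Since $e_{\mathcal{O}}$ is $\mathbb{F}$-rational, this $\mathbb{E}$-space is the scalar extension of $\mathbb{F}C_ne_{\mathcal{O}}$, and intersecting it with $(\mathrm{M}_n(\mathbb{F}))_1$ gives exactly $\mathbb{F}C_ne_{\mathcal{O}}$. Concretely, $x=xe_{\mathcal{O}}$ for $x\in\mathbb{F}C_n$.

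For the reverse inclusion, take $a\in\mathbb{F}C_ne_{\mathcal{O}}$ and write $a=\sum_{\chi\in\mathcal{O}}\lambda_\chi e_\chi$ with $\lambda_\chi\in\mathbb{E}$. I would try the substitution $z_1=\sum_{\chi\in\mathcal{O}}\lambda_\chi r_\chi$, $z_2=\cdots=z_n=r_1$, and show that $z_1$ has entries in $\mathbb{F}$. Writing $r_\chi=\sum_i\chi^{-1}(\alpha^{i-1})b_i$, the coefficient of $b_i$ in $z_1$ equals the $(i,i)$-entry of $\sum_\chi\lambda_\chi e_\chi=a$, which lies in $\mathbb{F}$. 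Hence $z_1=\sum_i a_{ii}b_i$ is an $\mathbb{F}$-matrix of degree $\alpha$, and by \Cref{1}, $f_{\mathcal{O}}(z_1,r_1,\ldots,r_1)=a$. This gives $\mathbb{F}C_ne_{\mathcal{O}}\subseteq\mathrm{Im}\,f_{\mathcal{O}}$ and completes the proof. The point to check carefully is the identification of the $b_i$-coefficients with the diagonal entries of $a$, since this is exactly what makes the substitution defined over $\mathbb{F}$.
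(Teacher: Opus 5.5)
Your proposal is correct and follows essentially the same route as the paper: Galois-invariance of the coefficients $\sum_{\chi\in\mathcal{O}}\chi^{-1}(\tau)$, then the substitution $z_2=\cdots=z_n=r_1$ combined with \Cref{1}. Your $z_1=\sum_i a_{ii}b_i$ is literally the paper's $\sum_j\lambda_j\tau_j(r_{\mathcal{O}})$, because $e_{ii}\mapsto b_i$ is $C_n$-equivariant and sends $e_\chi$ to $r_\chi$. Your argument is in fact slightly tighter in three places: showing that each $g$ permutes $\mathcal{O}$ avoids the paper's division by the stabilizer order $|\mathrm{Gal}(\mathbb{E}/\mathbb{F})|/|\mathcal{O}|$ (which can be divisible by $p$), and you explicitly check both that the substitution has entries in $\mathbb{F}$ and that $\mathrm{Im}\,f_{\mathcal{O}}\subseteq\mathbb{F}C_ne_{\mathcal{O}}$, which the paper leaves implicit.
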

\begin{proof}
By definition,
\[
f_\mathcal{O}=\sum_{\chi\in\mathcal{O}}\sum_{\tau\in C_n}\chi(\tau)^{-1}z_{\tau(1)}\cdots z_{\tau(n)}=\sum_{\tau\in C_n}\left(\sum_{\chi\in\mathcal{O}}\chi(\tau)^{-1}\right)z_{\tau(1)}\cdots z_{\tau(m)}.
\]
Since $\mathcal{O}$ is a $\mathrm{Gal}(\mathbb{E}/\mathbb{F})$-orbit, fixing any $\chi\in\mathcal{O}$, we have
\[
\sum_{\chi\in\mathcal{O}}\chi(\tau)^{-1}=\frac{|\mathcal{O}|}{|\mathrm{Gal}(\mathbb{E}/\mathbb{F})|}\sum_{g\in\mathrm{Gal}(\mathbb{E}/\mathbb{F})}(g\cdot\chi)(\tau)^{-1}.
\]
Hence, $\sum_{g\in\mathrm{Gal}(\mathbb{E}/\mathbb{F})}g(\chi(\tau)^{-1})\in\mathbb{E}^{\mathrm{Gal}(\mathbb{E}/\mathbb{F})}=\mathbb{F}$. As a consequence, $f_\mathcal{O}\in\mathbb{F}\langle X^{C_n}\rangle$.

Now, consider the notation of \Cref{1}, and let $r_\mathcal{O}=\sum_{\chi\in\mathcal{O}}r_\chi$. Then, from \Cref{1}, $f_\mathcal{O}(r_\mathcal{O},r_1,\ldots,r_1)=e_\mathcal{O}$. Moreover, for each $\tau\in C_n$, we have
\[
\tau(e_\mathcal{O})=f_\mathcal{O}(\tau(r_\mathcal{O}),\tau(r_1),\ldots,\tau(r_1))=f_\mathcal{O}(\tau(r_\mathcal{O}),r_1,\ldots,r_1).
\]
Let $\tau_1$, \dots, $\tau_s\in C_n$ be such that $\{\tau_1(e_\mathcal{O}),\ldots,\tau_s(e_\mathcal{O})\}$ is a $\mathbb{F}$-basis of $\mathbb{F}C_ne_{\mathcal{O}}$. Then, for $\lambda_1$, \dots, $\lambda_s\in\mathbb{F}$, one has
\[
f_\mathcal{O}(\sum_{j=1}^s\lambda_j\tau_j(r_\mathcal{O}),r_1,\ldots,r_1)=\sum_{j=1}^s\lambda_j\tau_j(e_\mathcal{O}).
\]
This gives $\mathrm{Im}\,f_\mathcal{O}=\mathbb{F}C_ne_\mathcal{O}$.
\end{proof}

Now, we obtain our main result.
\begin{Thm}
Let $n\in\mathbb{N}$ and $\mathbb{F}$ an arbitrary field, either of zero characteristic or $\mathrm{char}\,\mathbb{F}$ not dividing $n$. Consider the canonical $C_n$-grading on $\mathrm{M}_n(\mathbb{F})$, i.e., the one given by \eqref{grading}, and let $\mathcal{V}$ be a $C_n$-submodule of $(\mathrm{M}_n(\mathbb{F}))_1$, where the $C_n$-action is given by \eqref{action}. Then, there exists a multilinear polynomial in graded variables $f\in\mathbb{F}\langle X^{C_n}\rangle$ such that $\mathrm{Im}(f)=\mathcal{V}$.
\end{Thm}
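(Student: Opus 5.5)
The plan is to reduce to \Cref{coef} by decomposing $\mathcal{V}$ into isotypic pieces over $\mathbb{F}$, and then to add the corresponding polynomials $f_{\mathcal{O}}$.

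First, identify $(\mathrm{M}_n(\mathbb{F}))_1$ with the regular module $\mathbb{F}C_n$, as already done before \Cref{coef}. Since $\operatorname{char}\mathbb{F}\nmid n$, Maschke's theorem says $\mathbb{F}C_n$ is semisimple and commutative, with decomposition \eqref{eq} into the simple components $\mathbb{F}C_ne_{\mathcal{O}_i}$. Each component is a field, so it is an irreducible $C_n$-module, and these modules are pairwise non-isomorphic because they have distinct annihilators. Hence every $C_n$-submodule $\mathcal{V}$, which is the same as an ideal of $\mathbb{F}C_n$, has the form
\[
\mathcal{V}=\mathbb{F}C_ne_{\mathcal{O}_{i_1}}\oplus\cdots\oplus\mathbb{F}C_ne_{\mathcal{O}_{i_t}}
\]
for some subset of the orbits. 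To see this, write $e=\sum_i e_{\mathcal{O}_i}$ and use $v=\sum_i v e_{\mathcal{O}_i}$ with $ve_{\mathcal{O}_i}\in\mathcal{V}$. If $ve_{\mathcal{O}_i}\neq0$, then it generates the whole field $\mathbb{F}C_ne_{\mathcal{O}_i}$.

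Next, set $f=\sum_{j=1}^t f_{\mathcal{O}_{i_j}}(z_1,\ldots,z_n)$. By \Cref{coef}, $f$ has coefficients in $\mathbb{F}$ and is multilinear. If $t=0$, take $f=0$ (or $f=x_1^{(1)}-x_1^{(1)}$, if a nonzero expression is preferred).

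For the inclusion $\mathrm{Im}\,f\subseteq\mathcal{V}$: by \Cref{lem1}, after extending scalars to $\mathbb{E}$, each $f_\chi$ evaluated on homogeneous elements of degree $\alpha$ lies in $\mathcal{V}_\chi=\mathbb{E}e_\chi$. Indeed, by multilinearity any evaluation is a combination of evaluations on the basis elements $b_i$. So $f$ evaluated on $\mathbb{F}$-points lies in
\[
\Bigl(\bigoplus_{\chi\in\bigcup_j\mathcal{O}_{i_j}}\mathbb{E}e_\chi\Bigr)\cap \mathbb{F}C_n=\bigoplus_j \mathbb{F}C_ne_{\mathcal{O}_{i_j}}=\mathcal{V}.
\]

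For the reverse inclusion, use the substitution from \Cref{1}. By \Cref{1}, $f_\chi(r_{\chi'},r_1,\ldots,r_1)=\delta_{\chi,\chi'}e_\chi$, so by linearity in the first variable,
\[
f\Bigl(\sum_j s_j,r_1,\ldots,r_1\Bigr)=\sum_j f_{\mathcal{O}_{i_j}}(s_j,r_1,\ldots,r_1)
\]
whenever each $s_j$ lies in the $\mathbb{F}$-span of $\{\tau(r_{\mathcal{O}_{i_j}})\}$. Cross terms vanish because the orbits are disjoint and $\tau(r_{\mathcal{O}})$ is an $\mathbb{E}$-combination of the $r_\chi$ with $\chi\in\mathcal{O}$. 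Moreover, $\tau(r_\chi)$ is a scalar multiple of $r_\chi$, which I will verify in the same way as $\tau(e_\chi)=\chi(\tau)e_\chi$. The proof of \Cref{coef} then shows that each $s_j$ can be chosen so that $f_{\mathcal{O}_{i_j}}(s_j,r_1,\ldots,r_1)$ is an arbitrary element of $\mathbb{F}C_ne_{\mathcal{O}_{i_j}}$. Summing over $j$ gives all of $\mathcal{V}$.

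The main obstacle is checking that these substitutions are legitimate $\mathbb{F}$-points, that is, $r_{\mathcal{O}}\in\mathrm{M}_n(\mathbb{F})$. This holds because its coefficients $\sum_{\chi\in\mathcal{O}}\chi^{-1}(\alpha^{i-1})$ are Galois-invariant, by the same argument as in \Cref{coef}. The other point to check is that cross terms $f_\chi(r_{\chi'},\ldots)$ vanish for $\chi\neq\chi'$; this is exactly \Cref{1}.
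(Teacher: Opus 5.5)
Your proposal is correct and follows the paper's proof. You decompose $\mathcal{V}$ into the components $\mathbb{F}C_ne_{\mathcal{O}}$, take $f=\sum f_{\mathcal{O}}$, and substitute $\bigl(\sum\lambda\,\tau(r_{\mathcal{O}}),r_1,\ldots,r_1\bigr)$; along the way you make explicit points the paper leaves implicit, namely the inclusion $\mathrm{Im}\,f\subseteq\mathcal{V}$ via \Cref{lem1} and Galois descent, the vanishing of cross terms between distinct orbits, and the fact that $r_{\mathcal{O}}\in\mathrm{M}_n(\mathbb{F})$.
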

\begin{proof}
Each $C_n$-submodule $\mathcal{V}$ of $\left(\mathrm{M}_n(\mathbb{F})\right)_1$ is a direct sum of components appearing in the decomposition~\eqref{eq}, say $\mathcal{V}=\mathbb{F}C_ne_{\mathcal{O}_1}\oplus\cdots\oplus\mathbb{F}C_ne_{\mathcal{O}_t}$. Let $f_{\mathcal{O}_i}$ be as in \Cref{coef}, for each $i$, and let $f=\sum_if_{\mathcal{O}_i}$. Let $\tau_{1,i}$, \dots, $\tau_{s_i,i}\in C_n$ be such that $\{\tau_{1,i}(e_{\mathcal{O}_i}),\ldots,\tau_{s_i,i}(e_{\mathcal{O}_i})\}$ is a $\mathbb{F}$-basis of $\mathbb{F}C_ne_{\mathcal{O}_i}$. Then, using the notation of the proof of \Cref{coef} and \Cref{1}, one has, for all $\lambda_{i,j}\in\mathbb{F}$,
\[
f(\sum_{i=1}^t\sum_{j=1}^{s_i}\lambda_{i,j}\tau_{j,i}(r_{\mathcal{O}_i}),r_2,\ldots,r_n)=\sum_{i=1}^t\sum_{j=1}^{s_i}\lambda_{i,j}\tau_{j,i}(e_{\mathcal{O}_i}).
\]
Hence, $\mathrm{Im}\,f=\mathcal{V}$.
\end{proof}
This provides a negative answer to \cite[Conjecture 1]{CM}.

Finally, it is interesting to ask whether if a image of a multilinear polynomial is a vector space. Given a permutation $\sigma\in\mathcal{S}_{n-1}$ and $\chi\in\widehat{C_n}$, we set
\[
f_{\sigma,\chi}:=f_\chi(z_{\sigma(1)},\ldots,z_{\sigma(n-1)},z_n).
\]
Then, it is clear that $\{f_{\sigma,\chi}\mid\sigma\in\mathcal{S}_{n-1},\chi\in\widehat{C_n}\}$ is a basis of the multilinear polynomials of degree $n$ in the variables $\{z_1,\ldots,z_n\}$. We provide a positive answer for the case where $n=3$ and the base field contains a primitive cube root of the unity:

\begin{proposition}
    Let $\mathbb{F}$ be a field containing a primitive cube root of unity (so, $\mathrm{char}\,\mathbb{F}\ne3$), and let $f \in \mathbb{F}\langle X^{C_3} \rangle$ be a multilinear polynomial in variables $z_1$, $z_2$, $z_3$ such that $\deg(z_i) = \alpha$ for $i = 1, 2, 3$. Then, $\mathrm{Im}(f)$, evaluated on $\mathrm{M}_3(\mathbb{F})$ endowed with the canonical $C_3$-grading, is a vector subspace.
\end{proposition}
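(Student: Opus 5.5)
Throughout, write $\omega$ for the fixed primitive cube root of unity in $\mathbb{F}$. Since $\mathrm{char}\,\mathbb{F}\ne3$ and $\omega\in\mathbb{F}$, \Cref{prop} and the decomposition $(\mathrm{M}_3(\mathbb{F}))_1=\mathcal{V}_{\chi_0}\oplus\mathcal{V}_{\chi_1}\oplus\mathcal{V}_{\chi_2}$ into the three character lines are available. Here $\chi_k(\alpha)=\omega^k$, with $\chi_0$ the trivial character.

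\emph{Coordinates.} By the basis $\{f_{\sigma,\chi}\}$ described above, every such $f$ can be written as
\[
f=\sum_{k=0}^{2}\bigl(c_k f_{\chi_k}(z_1,z_2,z_3)+d_k f_{\chi_k}(z_2,z_1,z_3)\bigr),\qquad c_k,d_k\in\mathbb{F}.
\]
Every graded evaluation has $a_i=\sum_j a_{ij}b_j\in(\mathrm{M}_3(\mathbb{F}))_\alpha$. By \Cref{lem1} and multilinearity, each $f_{\chi_k}$ takes values in $\mathcal{V}_{\chi_k}$. Hence
\[
f(a_1,a_2,a_3)=\sum_k L_k(a_1,a_2,a_3)\,e_{\chi_k},
\]
where each $L_k$ is an explicit scalar trilinear form in the nine entries $a_{ij}$.

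To find $L_k$, use $b_1b_2b_3=e_{11}$, $b_2b_3b_1=e_{22}$, $b_3b_1b_2=e_{33}$. From these, $\Phi_k(a_1,a_2,a_3):=$ the $e_{\chi_k}$-coordinate of $f_{\chi_k}(a_1,a_2,a_3)$ is a sum of monomials $a_{1,i}a_{2,i+1}a_{3,i+2}$ and their cyclic variable shifts, weighted by powers of $\omega$. Then $L_k=c_k\Phi_k(a_1,a_2,a_3)+d_k\Phi_k(a_2,a_1,a_3)$.

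\emph{Target.} Let $S=\{k : L_k\not\equiv0\}$. Clearly $\mathrm{Im}\,f\subseteq\bigoplus_{k\in S}\mathcal{V}_{\chi_k}$. I will prove equality, which shows the image is a subspace. The tools are:
\begin{itemize}
\item multilinearity, so $\mathrm{Im}\,f$ is closed under scalars;
\item equivariance $f(\tau a_1,\tau a_2,\tau a_3)=\tau f(a_1,a_2,a_3)$, so $\mathrm{Im}\,f$ is $C_3$-stable.
\end{itemize}

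\emph{Main step.} Following \Cref{1} and \Cref{coef}, I will freeze $a_2,a_3$ and let $a_1$ vary. The map $a_1\mapsto(L_k(a_1,a_2,a_3))_{k\in S}$ is then linear from the $3$-dimensional space $(\mathrm{M}_3(\mathbb{F}))_\alpha$ to $\mathbb{F}^{|S|}$. It suffices to find one pair $(a_2,a_3)$ making this map surjective, since its image then equals $\bigoplus_{k\in S}\mathcal{V}_{\chi_k}$.

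First try $a_2=a_3=r_1$, as in \Cref{1}. Using $r_1^j r_{\chi}r_1^{2-j}=\chi(\alpha^j)e_\chi$, the $d_k$-term is also computable: $\Phi_k(r_1,a_1,r_1)$ is a scalar multiple of the $\chi_k$-coefficient of $a_1$ in the basis $r_{\chi_0},r_{\chi_1},r_{\chi_2}$. So with $a_1=\sum_j\mu_j r_{\chi_j}$, the map becomes diagonal:
\[
L_k=(c_k+\epsilon_k d_k)\,\mu_k
\]
for explicit roots of unity $\epsilon_k$, and then surjectivity is immediate.

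\emph{Expected obstacle.} The difficulty is cancellation: $c_k+\epsilon_k d_k=0$ can happen while $L_k\not\equiv0$. In that case I will replace $(r_1,r_1)$ by a family such as $a_2=r_1$, $a_3=\sum_j\nu_jb_j$ with free parameters $\nu_j$. For each such $k$ I compute the new diagonal (or triangular) coefficient in terms of $c_k,d_k,\nu$ and $\omega$, and choose $\nu$ so that none of the three coefficients vanishes.

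Over an infinite field this works by a Zariski-open argument: each condition is a nonzero polynomial in $\nu$. Since $\mathbb{F}$ may be finite (e.g.\ $\mathbb{F}_4$, $\mathbb{F}_7$), I would instead exhibit explicit choices. This gives a short case analysis according to which of the conditions $d_k=0$, $c_k=0$, $c_k+\epsilon_kd_k=0$ hold, using only the entries $0$, $1$ and $\omega$.

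If a single $(a_2,a_3)$ still fails for some case, there is a fallback. Realize each $\mathcal{V}_{\chi_k}$, $k\in S$, separately. Then combine using $C_3$-stability: a vector with several nonzero $\chi$-components generates, under $\tau$, all of their span. Together with closure under scalars and a direct evaluation producing a vector with all $S$-components nonzero, this yields every element of $\bigoplus_{k\in S}\mathcal{V}_{\chi_k}$.
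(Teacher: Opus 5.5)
\textbf{The central step fails.} For some $f$ there is no pair $(a_2,a_3)$ at all that makes $a_1\mapsto f(a_1,a_2,a_3)$ onto $\bigoplus_{k\in S}\mathcal{V}_{\chi_k}$.

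Take $f=[z_1,z_2]z_3=z_1z_2z_3-z_2z_1z_3$. In your coordinates this is $c_k=1$, $d_k=-1$, so $S=\{0,1,2\}$. Your evaluation at $a_2=a_3=r_1$ gives only traceless matrices, so it misses $\mathcal{V}_{\chi_0}$. Write $a_1=\sum_jx_jb_j$, $a_2=\sum_jy_jb_j$, $a_3=\sum_jw_jb_j$ and $m=x\times y\in\mathbb{F}^3$ (cross product). A direct computation gives
\[
f(a_1,a_2,a_3)=m_3w_3\,e_{11}+m_1w_1\,e_{22}+m_2w_2\,e_{33}.
\]
The map $x\mapsto x\times y$ is linear and kills $y$. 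So for fixed $a_2$, the vector $m$ ranges over a subspace of dimension at most $2$. Hence every $a_1$-slice has rank at most $2<|S|$, for every choice of $(a_2,a_3)$ and over every field.

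In particular, no choice of $\nu$ in your family $a_2=r_1$, $a_3=\sum_j\nu_jb_j$ can work. Your Zariski-open argument also has nothing to act on, because the relevant determinant is the zero polynomial.

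The image is nonetheless all of $(\mathrm{M}_3(\mathbb{F}))_1$. For $a_1=b_1-b_2$ and $a_2=b_2-b_3$ one gets $m=(1,1,1)$, and then $a_3\mapsto f(a_1,a_2,a_3)$ is bijective. But this needs the third slot to be the free one. This $f$ is exactly the polynomial $\sum_\chi(f_\chi-f_{(12),\chi})$ that the paper isolates and treats with separate evaluations in which more than one argument carries parameters.

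More generally, for $|S|=3$ the $a_1$-slice determinant vanishes identically exactly when all $c_k\neq0$ and $\{-\omega^{-k}d_k/c_k\}_{k=0,1,2}=\{1,\omega,\omega^2\}$. Another instance is $z_1z_2z_3-z_3z_2z_1$.

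\textbf{The fallback does not close the gap.} $\mathrm{Im}\,f$ is closed under scalars and under $C_3$, but closure under addition is exactly what has to be proved. From one vector $v$ with all $S$-components nonzero you get only the lines $\mathbb{F}\,\tau v$, $\tau\in C_3$, which is at most three lines. For $|S|=3$, the vectors with all three components nonzero fill at least nine lines. So the fallback only shows that the \emph{span} of $\mathrm{Im}\,f$ is $\bigoplus_{k\in S}\mathcal{V}_{\chi_k}$, not that the image itself is.

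\textbf{A possible repair.} Let the free slot depend on $f$. The analogous degeneracy conditions are $\{-\omega^{k}d_k/c_k\}=\{1,\omega,\omega^2\}$ for the $a_2$-slice and $\{-d_k/c_k\}=\{1,\omega,\omega^2\}$ for the $a_3$-slice. The three conditions never hold simultaneously, and for $|S|\le 2$ the $a_1$-slice never degenerates. The relevant minors have degree at most $2$ in each coordinate, and $|\mathbb{F}|\ge 4$, so a nonvanishing $\mathbb{F}$-point exists even when $\mathbb{F}$ is finite.

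As submitted, this case analysis is deferred, and it is the entire content of the proposition. The route you committed to, freezing $(a_2,a_3)$ and varying $a_1$ only, cannot carry it out.
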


\begin{proof}
As the above discussion, write
    \[
    f = 
    \sum_{\sigma \in\mathcal{S}_2}\sum_{\chi \in \widehat{C_3}}\nu_{\sigma, \chi}f_{\sigma, \chi}.
    \]

    From Lemma~\ref{1} and its notation, we have
    \[f(r_1,r_1,\sum_{\chi\in\widehat{C_3}}\lambda_\chi r_\chi) = \sum_{\chi \in \widehat{C_3} }\lambda_\chi\chi(\alpha)\left(\sum_{\sigma \in S_2}\nu_{\sigma, \chi}\right) e_\chi.
    \]
    This evaluation establishes the result for all cases, except for the scenario where there exists at least one pair $\{\nu_{(1), \chi}, \nu_{(12), \chi}\}$ such that both are non-zero and their sum vanishes.

    To address this case, we analyze the polynomial
    \[
    f = \sum_{\chi \in \widehat{C_n}}\nu_\chi (f_\chi - f_{(12), \chi}).
    \]
    The next two evaluations shows that the image is always a vector subspace:
    \[
    \begin{split}
    f(\lambda_0r_{\chi_0} + \lambda_1 r_{\chi_1}, \lambda_2 r_{\chi_1}, r_{\chi_2}) = {} & \lambda_0\lambda_2\nu_{\chi_0}(\chi_1^{-1}(\alpha^2)-1)e_{\chi_0} \\
    & + \lambda_0\nu_{\chi_1}(\chi_2^{-1}(\alpha^2)- \chi_2^{-1}(\alpha))e_{\chi_1} \\
    & + \lambda_1\nu_{\chi_2}(\chi_2^{-1}(\alpha) - \chi_2^{-1}(\alpha^2))e_{\chi_2},
    \end{split}
    \]
    
    \[
    \begin{split}
    f(\lambda_0r_{\chi_0} + \lambda_2 r_{\chi_2}, r_{\chi_1}, r_{\chi_2}) = {} & \lambda_0\nu_{\chi_0}(\chi_1^{-1}(\alpha^2)-\chi_1^{-1}(\alpha))e_{\chi_0} \\
    & + \lambda_2\nu_{\chi_2}(\chi_2^{-1}(\alpha^2)- \chi_2^{-1}(\alpha))e_{\chi_2}.
    \end{split}
    \]
    The proof is complete.
\end{proof}

\end{document}